\documentclass[12pt,a4paper]{amsart}
\usepackage{amsfonts}
\usepackage{amsthm}
\usepackage{amsmath}
\usepackage{amscd}
\usepackage[latin2]{inputenc}
\usepackage{t1enc}
\usepackage[mathscr]{eucal}
\usepackage{indentfirst}
\usepackage{graphicx}
\usepackage{graphics}
\usepackage{pict2e}
\usepackage{epic}
\numberwithin{equation}{section}
\usepackage[left=1in,right=1in,top=1in,bottom=1in]{geometry}
\usepackage{epstopdf} 

\usepackage{hyperref}
\hypersetup{
	colorlinks = true,
    linkcolor = blue,
    citecolor = red,
    urlcolor = black,
}

 \pagestyle{plain}

\theoremstyle{plain}
\newtheorem{Th}{Theorem}[section]
\newtheorem{Lemma}[Th]{Lemma}

 \theoremstyle{definition}

\newtheorem{?}[Th]{Problem}

\newcommand{\R}{\mathbb R}

\newcommand{\C}{\mathbb C}

\setlength{\textwidth}{15.9cm} \setlength{\textheight}{23cm}
\setlength{\parskip}{2mm} \setlength{\parindent}{6mm}

\addtolength{\textheight}{9.0mm}
\addtolength{\footskip}{9.0mm}

\begin{document}

\title{A better bound for ordinary triangles}
\author{Quentin Dubroff*}
\thanks{*Department of Mathematics, EPFL, Lausanne, Switzerland, email: \href{mailto:quentindubroff@gmail.com}{quentindubroff@gmail.com}. Partially supported by Swiss National Science Foundation grants 200020-162884 and 200021-165977}

\begin{abstract}
Let $P$ be a finite set of points in the plane. A \emph{c-ordinary triangle} is a set of three non-collinear points of $P$ such that each line spanned by the points contains at most $c$ points of $P$. We show that if $P$ is not contained in the union of two lines and $|P|$ is sufficiently large, then it contains an 11-ordinary triangle. This improves upon a result of Fulek et al., who showed one may take $c=12000$.
\end{abstract}

\maketitle

\vspace{-4mm}
\section{Introduction}

Let $P$ be a set of $n$ points in $\R^2$. A line which contains two or more points of $P$ is called a \emph{spanned line}, and we call a spanned line \emph{c-ordinary} if it contains at most $c$ points of $P$. The Sylvester-Gallai Theorem states that there exists a 2-ordinary line in any set of non-collinear points. Various proofs of this result have been found, and this work sparked an expansive literature, for which we refer the interested reader to \cite{GreenTao}.

Erd\H{o}s \cite{ErdosProblems} was the first to consider conditions under which one can find a \emph{2-ordinary triangle} in a set of points, that is to say three non-collinear points such that each spanned line is 2-ordinary. While it is clear that the point set must not be contained in the union of two lines, each containing at least three points, the following example illustrates that this is not sufficient \cite{OrdinaryTriangles}: Given a set of points $P$ and a line $l$ not spanned by these points, add to $P$ the points of intersection between $l$ and each of the lines spanned by $P$, to get a new set of points $P'$. Any 2-ordinary triangle in $P'$ must contain at least two points from the original point set $P$, however, each line spanned by $P$ contains a third point on $l$. Therefore, there are no 2-ordinary triangles in $P'$. In fact, a construction of F\"{u}redi and Pal\'{a}sti \cite{Arrangements} shows that one may not always find a 2-ordinary triangle, even under the restriction that every line is 3-ordinary.

With an interest in studying ordinary conics \cite{Conics}, de Zeeuw asked whether there exists an integer $c$ such that every set $P$ of points not contained in the union of two lines contains a \emph{c-ordinary triangle}, that is, three non-collinear points of $P$, where each line spanned by the points is $c$-ordinary. Fulek, Nassajian Mojarrad, Nasz\'{o}di, Solymosi, Stich, and Szedl\'{a}k \cite{OrdinaryTriangles} answered in the affirmative for $n$ sufficiently large, and showed one may take $c=12000$. We improve this to $c=11$.

\begin{Th}\label{mythm}
If $P$ is a finite set of $n$ points in $\R^2$ not contained in the union of two lines, with $n$ larger than 7697, then there exists an 11-ordinary triangle in $P$.
\end{Th}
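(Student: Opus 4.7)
The plan is to prove a structural dichotomy and handle each case separately, mirroring the approach of Fulek et al.\ \cite{OrdinaryTriangles} but with sharper structural input. The starting point is a strong form of the Green--Tao theorem on ordinary lines: for $n$ sufficiently large, either $P$ spans at least $cn$ ordinary (i.e.\ $2$-ordinary) lines for an absolute constant $c>0$, or $P$ lies, up to $O(1)$ exceptional points, in one of the known extremal configurations --- points on a cubic curve, on a conic together with an auxiliary line, or in a ``prism''-type configuration lying on two lines together with a few points of concurrence.

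\textbf{Case 1 (few ordinary lines).} Here $P$ is essentially one of the extremal configurations. On a smooth cubic, a line meets the curve in at most three points, so any triangle of cubic points has spanned lines containing only $O(1)$ additional points (those lying off the curve), and choosing three cubic points in general position with respect to this $O(1)$ exceptional set gives a triangle whose three spanned lines each carry at most $11$ points. The conic-plus-line case is similar: lines through two conic points are $3$-ordinary on the conic, so restricting to triangles formed by conic points disjoint from the auxiliary line works. The prism case is the most delicate, since it lives on two lines --- but the hypothesis of the theorem explicitly excludes $P \subset \ell_1 \cup \ell_2$, so $P$ must contain off-line points, and one uses those to build an $11$-ordinary triangle.

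\textbf{Case 2 (many ordinary lines).} I would use a double-counting argument over ordinary pairs. For each ordinary pair $\{p,q\}$, call $r \in P$ \emph{bad} if one of the lines $pr,\,qr$ contains at least $12$ points of $P$, and \emph{good} otherwise. Szemer\'edi--Trotter bounds the number of $k$-rich lines by $O(n^2/k^3 + n/k)$, which limits the count of triples $(\{p,q\},r)$ for which $r$ is bad. Summing over the $\ge cn$ ordinary pairs and comparing to the total number $(cn)(n-2)$ of candidate triples shows that for $n$ large enough there exists an ordinary pair admitting a good $r$ not collinear with $p,q$; this triple is then an $11$-ordinary triangle.

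The main obstacle is Case~1, specifically turning the qualitative ``up to $O(1)$ exceptional points'' into explicit constants that track through the whole argument: one must handle the interaction between the structured part of $P$ and the exceptional points carefully, and verify that in every sub-configuration (cubic, conic-plus-line, prism) a triangle can be chosen whose three spanned lines each avoid enough of the rest of $P$ to carry at most $11$ points. The threshold $n > 7697$ and the constant $c = 11$ presumably arise from optimizing these effective constants, which is precisely where the improvement from $c=12000$ to $c=11$ must come from.
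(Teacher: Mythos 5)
Your proposal has two genuine gaps, one in each case. The fatal one is quantitative: Case 1 rests on the Green--Tao structure theorem for sets with few ordinary lines, but that theorem is only known for $n \geq \exp\exp(CK^C)$ when the ordinary-line count is at most $Kn$, so no amount of ``optimizing effective constants'' can produce a threshold like $n > 7697$; the ineffectivity is in the structure theorem itself, not in your bookkeeping. (Your list of extremal configurations also omits the near-pencil case --- all but $O(K)$ points on a single line --- which is precisely the hardest case here.) The paper never touches structure theory. Its dichotomy is elementary: either some spanned line $L_i$ has $l_i \geq \lambda n$ points with $\lambda = \frac{5/2}{c+1}$, or no line does. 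In the rich-line case it applies the plain Sylvester--Gallai theorem to $P \setminus L_i$ to get a $2$-ordinary line $\overline{qr}$, and then counts: each point $p \in L_i$ with $|\overline{pq} \cap P| > c$ uses up at least $c-1$ points of $P \setminus L_i$, disjointly, so fewer than $\frac{2}{5}l_i$ points of $L_i$ are bad for $q$, and likewise for $r$, leaving at least $\frac{l_i}{5}$ apexes that complete an $11$-ordinary triangle. This is exactly the effective replacement for your cubic/conic/near-pencil casework.

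Case 2 as you sketch it does not close either, even asymptotically. For a fixed ordinary pair $\{p,q\}$, the number of bad $r$ can be $n - O(1)$ --- in a near-pencil, every $r$ on the big line is bad for any pair containing a point of that line --- and Green--Tao gives no control over where the ordinary lines attach, so the sum of bad triples can reach the trivial total $\Theta(N_{\mathrm{ord}} \cdot n)$. Worse, with explicit Szemer\'edi--Trotter constants, the count of point pairs on $12$-rich lines is bounded only by a multiple of $n^2$ that exceeds $\binom{n}{2}$, so the ST input is vacuous at richness $12$; this is exactly why the paper uses de Zeeuw's Langer-based bound, which caps the number of lines with more than $k$ points at $\frac{4}{(k-1)^2}|\mathcal{L}|$. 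The missing idea is localization rather than averaging: the paper uses the Langer-based weak Dirac bound to fix one point $q$ on $l \geq \frac{n}{3}$ spanned lines, shows its $c$-ordinary neighborhood $N_q$ satisfies $|N_q| \geq \frac{cl - n + 1}{c-1}$, observes that absent a $c$-ordinary triangle every line spanned by $N_q$ and missing $q$ must contain at least $c+1$ points of $P$, and then plays the $\geq \frac{|N_q|^2}{9}$ lines spanned by $N_q$ (de Zeeuw's sharpening of Beck) against the $\frac{4}{(c-1)^2}|\mathcal{L}|$ cap on rich lines, yielding $c < \frac{3(\sqrt{6}+1)}{1 - 3\sqrt{3/n}} < 11$ once $n \geq 7697$. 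Without this high-degree-point localization, your pair-averaged double count fails on exactly the configurations (near-pencils, sets with one very rich line) that the theorem must handle.
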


It remains open whether one may find 3-ordinary triangles in any set of points not contained in the union of two lines. Improvement on Theorem \ref{mythm} would follow from improvements on any of the lemmas used in the paper, none of which are known to be the best possible. In addition, all of the tools used in the proof of Theorem \ref{mythm} have suitable analogs in $\C^2$, where the proof gives $c=12$. Here too it remains open whether one may take $c=3$. More generally, it may even be possible to find $k$-ordinary $k$-tuples in every point set not contained in the union of $k-1$ lines. This would be the best possible due to a straightforward generalization of the construction described above, obtained by adding to $P$ the points of intersection of the spanned lines of $P$ and $k-2$ other lines. 

\section{Tools}
In this section, we introduce a few results that are needed for the proof of Theorem \ref{mythm}. These results may be found in \cite{LinesLanger}, which collects several results in combinatorial geometry that follow from an inequality of Langer \cite{Langer} in algebraic geometry, stating that the number of incidences between a set of $n$ points and their spanned lines is at least $\frac{n(n+3)}{3}$, provided that at most $\frac{2n}{3}$ points lie on one line. Han \cite{Dirac} observed that this incidence inequality immediately implies the following improvement of the weak Dirac conjecture. We include the proof of this lemma, as it will play a role in the proof of Theorem \ref{mythm}.

\begin{Lemma}\label{weakDirac}

Let $P$ be a set of $n$ points in $\R^2$, not all on a line. Then there exists a point of $P$ that is contained in at least $\frac{n}{3} + 1$ lines spanned by $P$.

\end{Lemma}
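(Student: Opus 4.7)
The plan is to derive the lemma directly from the Langer incidence bound by pigeonhole, after handling separately the degenerate case where most points are already collinear.

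First, I would split on whether any line contains more than $\tfrac{2n}{3}$ points. If no such line exists, then Langer's inequality (as quoted above) applies: the number $I(P)$ of point-line incidences between $P$ and its spanned lines satisfies $I(P) \geq \tfrac{n(n+3)}{3}$. Summing incidences by point gives $\sum_{p \in P} r(p) = I(P)$, where $r(p)$ is the number of spanned lines through $p$. By averaging, some point $p \in P$ has $r(p) \geq \tfrac{I(P)}{n} \geq \tfrac{n+3}{3} = \tfrac{n}{3}+1$, which is what we want.

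In the remaining case, some line $\ell$ contains more than $\tfrac{2n}{3}$ points of $P$. Since $P$ is not entirely collinear, there exists a point $p \in P \setminus \ell$. The lines joining $p$ to the distinct points of $P \cap \ell$ are themselves distinct (they already differ in which point of $\ell$ they pass through), and each is spanned by $P$, so $r(p) \geq |P \cap \ell| > \tfrac{2n}{3}$. For $n \geq 3$ this already exceeds $\tfrac{n}{3}+1$, and the small cases $n \leq 2$ are either vacuous or trivial.

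There is no real obstacle here: Langer's inequality does all the work, and the only subtlety is the hypothesis in Langer that no line carries more than $\tfrac{2n}{3}$ of the points, which is precisely what the second case is designed to absorb. The argument is a clean two-case pigeonhole, and any future improvement to the lemma would have to come from sharpening the Langer bound itself rather than from this deduction.
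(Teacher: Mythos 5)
Your proof is correct and is essentially the paper's own argument: the identical case split on whether some line carries more than $\tfrac{2n}{3}$ points, with Langer's incidence bound plus pigeonhole in one case and a point off the heavy line spanning more than $\tfrac{2n}{3}$ lines in the other. Your treatment is slightly more careful than the paper's (checking distinctness of the joining lines and the small values of $n$), but there is no substantive difference.
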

\begin{proof}
If more than $\frac{2n}{3}$ points of $P$ lie on one line, then any point not on this line will be contained in more than $\frac{2n}{3}$ lines. If not, then the conditions of Langer's inequality hold, and the number of incidences between the points of $P$ and the set of lines spanned by $P$ is at least $\frac{n(n+3)}{3}$. By the pigeonhole-principle, some point of $P$ must be contained  in at least $\frac{n}{3}+1$ lines.
\end{proof}
Next, we introduce de Zeeuw's \cite{LinesLanger} improvement on Beck's \cite{Beck} theorem of two extremes.

\begin{Lemma}\label{Beck}

Given a set of $n$ points in $\R^2$, at least one of the following is true:

\begin{itemize}
\item \textit{There is some line containing more than $\gamma n$ points of $P$, where $\gamma = (6+\sqrt{3})/9$.}
\vspace{1mm}
\item \textit{There are at least $\frac{n^2}{9}$ lines spanned by $P$.}
\end{itemize}

\end{Lemma}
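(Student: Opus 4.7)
Assume no spanned line contains more than $\gamma n$ points of $P$; write $t$ for the number of spanned lines, $m_\ell$ for the number of points on a spanned line $\ell$, and $I = \sum_\ell m_\ell$ for the total number of point-line incidences. Recall the identity $\sum_\ell m_\ell(m_\ell - 1) = n(n-1)$, obtained by counting ordered pairs of distinct points (each pair lies on a unique spanned line). Let $L$ be a spanned line with the maximum number $k$ of points, so $k \le \gamma n$. The plan is to case-analyze on whether Langer's inequality applies.

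\emph{Case 1: $k \le 2n/3$.} Langer's inequality applies, giving $I \ge n(n+3)/3$. Combining this with the identity $\sum_\ell m_\ell^2 = n(n-1) + I$ and the upper bound $m_\ell \le k$, bound $t$ from below by treating the distribution of the $m_\ell$'s as a linear program: minimize $\sum_\ell 1$ subject to the two equality constraints and $m_\ell \in [2,k]$. The minimizer is supported on at most two values of $m$, and I expect to show the resulting bound is at least $n^2/9$.

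\emph{Case 2: $2n/3 < k \le \gamma n$.} Here Langer does not apply directly to $P$. Set $P' = P \setminus L$, so $|P'| = n - k \in [(1-\gamma)n,\, n/3)$. If $P'$ lies on a single line $L'$, then $P \subseteq L \cup L'$, and every pair of points with one in $L \setminus L'$ and one in $L' \setminus L$ spans a distinct line. Since $k(n-k)$ is minimized on $[(1-\gamma)n, \gamma n]$ at the endpoints, one obtains at least $\gamma(1-\gamma)n^2$ spanned lines; the defining property of $\gamma$ is that $\gamma(1-\gamma) \ge 1/9$, which holds iff $\gamma \in [(3-\sqrt{5})/6,(3+\sqrt{5})/6]$, and $\gamma = (6+\sqrt{3})/9$ lies in this range. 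Otherwise $P'$ is not collinear, and Lemma~\ref{weakDirac} applied to $P'$ furnishes a point $p \in P'$ contained in at least $\tfrac{n-k}{3}+1$ lines spanned by $P'$, none of which equals $L$. Combining these with the $k$ crossing lines joining $p$ to the points of $L \cap P$ (all distinct, since $p \notin L$), together with the global crossing count $\sum_{\ell \text{ crosses } L} (m_\ell - 1) = k(n-k)$ bounded by $m_\ell \le \gamma n$, gives at least $n^2/9$ spanned lines for all $k/n \in (2/3,\gamma]$.

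\emph{Main obstacle.} The constant $\gamma = (6+\sqrt{3})/9$ must simultaneously satisfy the two-line collinear bound $\gamma(1-\gamma) \ge 1/9$ and whatever quadratic emerges from balancing Case 1 or the non-collinear subcase of Case 2. The delicate point is Case 1: a naive application of Cauchy--Schwarz to $\sum m_\ell^2 \ge I^2/t$ gives only $t \ge (n+3)^2/12$, which is strictly less than $n^2/9$ for large $n$, so the extremal LP analysis (or equivalently Hirzebruch-type reasoning leveraging $m_\ell \le k$) is essential. The non-collinear subcase of Case 2 also requires care: it must use both the recursive weak-Dirac bound for $P'$ and the sharp crossing count, since each of these bounds alone yields only $O(n)$ lines near the threshold $k = \gamma n$.
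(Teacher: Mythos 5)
You should first note that the paper does not prove this lemma at all --- it is quoted verbatim from de Zeeuw's note \cite{LinesLanger} --- so there is no in-paper proof to match; your proposal has to stand on its own. It does not: both of its load-bearing cases have genuine gaps, and the first one is provably unfixable as stated.

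\emph{Case 1 cannot work.} Your plan is to minimize $t = \sum_\ell 1$ subject to $\sum_\ell m_\ell(m_\ell-1) = n(n-1)$, $\sum_\ell m_\ell \ge n(n+3)/3$, and $2 \le m_\ell \le k$, hoping the cap $m_\ell \le k$ pushes the LP value past the Cauchy--Schwarz bound of roughly $n^2/12$. It does not, because the cap is slack at the LP optimum: take $y = \frac{n(n-9)}{12}$ lines of size $4$ and $x = \frac{4n}{3}$ lines of size $3$. Then $6x + 12y = n(n-1)$ (pairs exact), $3x + 4y = \frac{n(n+3)}{3}$ (Langer tight), and $t = x + y = \frac{n(n+7)}{12} < \frac{n^2}{9}$. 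Since $3, 4 \le k$ in every relevant regime, this is a feasible point of your LP, so \emph{no} argument using only the pair identity, Langer's incidence bound, and the maximum-line cap can yield $n^2/9$; Cauchy--Schwarz essentially already extracts the LP optimum. The constant $n^2/9$ genuinely requires more geometric input than these three constraints encode (this is presumably why de Zeeuw's proof is not a one-line convexity argument), and your ``I expect to show the resulting bound is at least $n^2/9$'' conceals exactly the false step.

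\emph{Case 2, non-collinear subcase, is also short by an order of magnitude.} You correctly observe that the weak-Dirac point $p$ gives $\frac{n-k}{3}+1 = O(n)$ lines and the crossing count with $m_\ell \le \gamma n$ gives at least $\frac{k(n-k)}{\gamma n} = O(n)$ lines, but then assert without mechanism that ``combining'' them yields $n^2/9$. Adding two $O(n)$ quantities gives $O(n)$, not $\Omega(n^2)$; worse, the $k$ lines joining $p$ to $L$ can coincide with lines spanned by $P'$ through $p$, so even the additivity you invoke needs an argument. Nor can you rescue this by counting crossing lines against the maximal collinear subset $\mu'$ of $P'$: if $P'$ is grid-like with $\mu' \approx \sqrt{n-k}$, the crossing lines number only about $k\sqrt{n-k} = o(n^2)$, and the lines internal to $P'$ are at most $\binom{n-k}{2} < \frac{n^2}{18}$ since $n - k < n/3$ --- so even granting the full Beck bound recursively on $P'$, the internal contribution alone, $\frac{(n-k)^2}{9} < \frac{n^2}{81}$, is far from sufficient. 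Some joint count (crossing lines \emph{and} internal lines, traded off against $\mu'$, or a different decomposition entirely) is needed, and your sketch contains none. For what it is worth, your collinear subcase is essentially sound: the $k(n-k)$ connecting lines are pairwise distinct (a line meeting $L$ or $L'$ twice equals $L$ or $L'$), the quadratic $\gamma(1-\gamma) \ge 1/9$ is correctly solved, and $\gamma = (6+\sqrt{3})/9$ satisfies it with $\gamma(1-\gamma) = \frac{5-\sqrt{3}}{27} \approx 0.121$ --- but this is the only subcase your proposal actually closes, and it is the easy one.
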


Lastly, de Zeeuw derived the following bound on the number of lines containing more than $k$ points of a point set. This lemma serves a similar purpose as the Szemer\'{e}di-Trotter Theorem, but gives better constants for small values of $k$ \cite{LinesLanger}.

\begin{Lemma}\label{linesbound}
Let $P$ be a set of $n$ points in $\R^2$ with no more than $\frac{2n}{3}$ points on a line, and let $\mathcal{L}$ be the set of lines spanned by $P$. Then the number of lines of $\mathcal{L}$ containing more than $k$ points of $P$ is at most
$$\frac{4}{(k-1)^2}
|\mathcal{L}|.$$
\end{Lemma}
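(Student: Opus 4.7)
The plan is to encode the desired bound as a weighted sum over the line-multiplicity distribution $t_i := \#\{\ell \in \mathcal L : |\ell \cap P| = i\}$. Set $T := |\mathcal L| = \sum_i t_i$ and $I := \sum_i i\, t_i$. Two ingredients are at hand: the pair-count identity $\sum_i i(i-1)\, t_i = n(n-1)$, and Langer's inequality $I \ge n(n+3)/3$, which is applicable since at most $2n/3$ points lie on any one line.

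The key step is to choose a nonnegative weight $f(i)$ satisfying $f(i) \ge (k-1)^2$ for every $i > k$, and whose total $\sum_i f(i)\, t_i$ admits a clean upper bound via the two ingredients above. I would try the quadratic $f(i) := (i-2)^2$: it is nonnegative on $i \ge 2$, and for $i \ge k+1$ one has $f(i) \ge (k-1)^2$. Therefore
$$(k-1)^2 \cdot \#\{\ell \in \mathcal L : |\ell \cap P| > k\} \le \sum_{i \ge 2} (i-2)^2\, t_i.$$
Expanding $(i-2)^2 = i(i-1) - 3i + 4$ and substituting the two identities gives
$$\sum_{i \ge 2} (i-2)^2\, t_i \;=\; n(n-1) - 3I + 4T \;\le\; n(n-1) - n(n+3) + 4T \;=\; 4(T-n) \;\le\; 4T,$$
where the middle inequality is Langer's bound $I \ge n(n+3)/3$. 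Dividing by $(k-1)^2$ yields the conclusion.

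The only delicate issue is locating the right weight. The shift $f(i) = (i-2)^2$ is tuned so that after expansion the coefficient of $I$ is exactly $-3$, which matches Langer's contribution $-3\cdot n(n+3)/3 = -n(n+3)$ and cancels the $n^2$ in the pair-count term $n(n-1)$; any other affine shift in $f$ would leave a residual quadratic in $n$ that could only be absorbed by imposing an extra lower bound on $|\mathcal L|$, such as Lemma \ref{Beck}. I expect spotting this exact cancellation to be the one non-routine step; everything else is bookkeeping with the two standard identities.
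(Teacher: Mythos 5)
Your proof is correct, and it is exactly the argument behind this lemma: the paper states it without proof, citing de Zeeuw \cite{LinesLanger}, where it is derived precisely by combining the pair-count identity $\sum_i i(i-1)t_i = n(n-1)$ with Langer's incidence bound via the weight $(i-2)^2$. Your computation even recovers the slightly stronger bound $\frac{4}{(k-1)^2}\bigl(|\mathcal{L}|-n\bigr)$ before relaxing to the stated one, so nothing is missing.
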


\section{Proof of Theorem \ref{mythm}}
\noindent In this section we prove Theorem \ref{mythm}. Here $P$ is a set of points not contained in the union of two lines, $\mathcal{L} = \{L_1, \ldots , L_m\}$ is the set of spanned lines, $|L_i| = l_i$, and we let $\lambda = \frac{5/2}{c+1}$, for $c$ to be determined later. The proof is split into the following two cases:
\begin{enumerate}
\item \label{case1} There exists a line $L_i \in \mathcal{L}$ with $l_i \geq \lambda n$.

\item \label{case2} Every line contains less than $\lambda n$ points of $P$.
\end{enumerate}

The proof for the first case follows that of Fulek et al. \cite{OrdinaryTriangles}. Our main contribution is finding a better argument for the second case.

Consider case (\ref{case1}). The set of points $P\setminus L_i$ is non-collinear by assumption, so there must be a 2-ordinary line $\overline{qr}$ by the Sylvester-Gallai Theorem, where $\overline{qr}$ denotes the line determined by $q$ and $r$. Note that $\overline{qr}$ intersects $L_i$ in at most one point. We show that there are many points on $L_i$ which form $c$-ordinary triangles with $\overline{qr}$. We define the set $P_q \subset P$ as
$$P_q = \{ p \in L_i \cap P : |\overline{pq}\cap P| > c\},$$
\noindent We define $P_r$ in a similar way. Any point of $P_q$ determines a line of $\mathcal{L}$ containing at least $c-1$ points of $P\setminus (L_i\cup \{q\})$, and furthermore, these points are disjoint for each $p\in P_q$, so we have

$$(c-1)|P_q| \leq n - l_i,$$
implying
$$|P_q| \leq \frac{n-l_i}{c-1} \leq \frac{l_i/\lambda - l_i}{c-1} = \frac{2(c+1)/5 - 1}{c-1} l_i < \frac{2}{5}l_i.$$

Similarly, $|P_r| < \frac{2}{5}l_i$, so there are at least $\frac{l_i}{5}$ points $s\in P\cap L_i$ such that $s\notin P_q\cup P_r$ and $s,q,r$ are non-collinear. Each of these points together with $q$ and $r$ determines a $c$-ordinary triangle for $P$.

Now consider case (\ref{case2}). We will derive an upper bound on $c$ assuming that $P$ does not contain a $c$-ordinary triangle. For a point $q\in P$, we define the set $N_q \subset P$ as 
$$N_q = \{p \in P : |\overline{qp}\cap P| \leq c\}.$$
By Lemma \ref{weakDirac}, there exists a point $q$ of $P$ that lies on at least $n/3$ lines. Let the number of these lines be denoted by $l$. Any line incident to $q$ which is not $c$-ordinary contains at least $c$ points of $P\setminus \{q\}$, and, as in case (\ref{case1}), these points are disjoint for each line incident to $q$, so we find that
$$|N_q| + (l-|N_q|)c \leq n-1,$$
that is,
\begin{equation}\label{neighborhood}
|N_q| \geq \frac{cl - n + 1}{c-1}.
\end{equation}

From the conditions of case (\ref{case2}) and for $c$ at least 11, no line contains $\gamma|N_q|$ points of $N_q$, with $\gamma$ from Lemma \ref{Beck}, so by the lemma, $N_q$ must span at least $|N_q|^2 / 9$ lines. All of these lines which do not go through $q$ must contain at least $c+1$ points of $P$, or we will find a $c$-ordinary triangle. By Lemma \ref{linesbound}, the number of these lines is at most
$$\frac{4}{(c-1)^2}|\mathcal{L}|.$$
\noindent At most $l$ of the lines spanned by $N_q$ are incident to $q$, so 
$$\frac{|N_q|^2}{9} - l \leq \frac{4}{(c-1)^2}|\mathcal{L}|,$$
\noindent which implies that
$$|N_q| \leq \frac{6}{c-1}\sqrt{|\mathcal{L}|} + 3\sqrt{l}.$$
\noindent Substituting inequality \ref{neighborhood}, we find 
$$\frac{cl - n}{c-1} \leq \frac{6}{c-1}\sqrt{|\mathcal{L}|} + 3\sqrt{l},$$
\noindent and therefore
$$c \leq \frac{6\sqrt{|\mathcal{L}|} + n - 3\sqrt{l}}{l - 3\sqrt{l}} < \frac{6\sqrt{|\mathcal{L}|} + n}{l - 3\sqrt{l}}.$$

From Lemma \ref{weakDirac}, $l \geq \frac{n}{3}$, so when $|\mathcal{L}| \leq \frac{n^2}{6}$, we have
$$c < \frac{3(\sqrt{6}+1)}{1 - 3\sqrt{\frac{3}{n}}}.$$

\noindent If instead $|\mathcal{L}| > \frac{n^2}{6}$, the number of incidences between $P$ and $\mathcal{L}$ is at least $2|\mathcal{L}| > \frac{n^2}{3}$, so the argument from the second part of the proof of Lemma \ref{weakDirac} gives $l \geq \frac{2\mathcal{|L|}}{n}$. Letting $|\mathcal{L}| = \frac{n^2}{B^2}$, for $B \in [\sqrt{2}, \sqrt{6})$, we will have $c$-ordinary triangles unless
$$c \leq \frac{6n/B + n}{\frac{2n}{B^2} - 3\frac{\sqrt{2n}}{B}} = \frac{6B + B^2}{2-3B\sqrt{\frac{2}{n}}} < \frac{3(\sqrt{6}+1)}{1-3\sqrt{\frac{3}{n}}}$$

\noindent Therefore, for $n \geq 7697$, we will find $c$-ordinary triangles if $c \geq 11$.

\section{Acknowledgments}

I would like to thank Frank de Zeeuw for his helpful input and extensive comments on the manuscript, as well as Hossein Nassajian Mojarrad for suggesting the problem. In addition, I would like to thank J\'{a}nos Pach for his hospitality and support throughout my research stay.

\end{document}